\pgfplotsset{compat=1.15}
\numberwithin{equation}{section} 
\theoremstyle{plain}
\newtheorem{theorem}{Theorem}[section]
\newtheorem{lemma}[theorem]{Lemma}
\theoremstyle{definition}
\newtheorem*{acknow}{Acknowledgments}
\begin{document}
	
	\title{\bf{ Edge statistics for singular values of products of rectangular complex Ginibre matrices }}
	
	\author{
		Yandong Gu\footnotemark[1],   
	}
	\renewcommand{\thefootnote}{\fnsymbol{footnote}}
	\footnotetext[1]{School of Mathematical Sciences, University of Science and Technology of China, Hefei 230026, P.R.~China. E-mail: gd27@mail.ustc.edu.cn
		
	}

	\maketitle
\begin{abstract}
We investigate the edge statistics of singular values for products of independent rectangular complex Ginibre matrices. Building on work \cite{LWW23}, which introduced the depth-to-width ratio (DWR) and established the critical kernel,  we show that the DWR acts as a sharp threshold: local  statistics exhibit Gaussian fluctuations in the low-DWR regime, while the Airy kernel emerges at the soft edge in the high-DWR regime.  
\end{abstract}

\section{Introduction and main results}

\subsection{Introduction}
Research on products of random matrices originated in the seminal 1954 work of Bellman \cite{Bel54}. Foundational asymptotic results were subsequently established by Furstenberg and Kesten \cite{FK60} in 1960, who developed laws of large numbers and central limit theorems for these products, thereby extending classical probability theory to non-commutative settings. Meanwhile, the Tracy-Widom distributions, arising from the edge statistics of random matrices\cite{TW94}, have become cornerstones of modern probability theory. 
There has been growing  interest in the spectral properties of random matrix products, motivated by their pivotal role in diverse fields such as wireless communications \cite{TV04}, statistical physics related to chaotic dynamical systems \cite{Cpv93}, and free probability theory \cite{MS17}.

In this paper, we consider products of independent complex Ginibre matrices,
\begin{equation}
	Y_M = X_M \cdots X_1,
\end{equation} where each $X_j$ is a complex Ginibre matrix of size $(N + v_{j}) \times (N + v_{j-1})$ with $v_0 = 0$ and  $v_1, \dots, v_M \geq 0$.
 Akemann, Kieburg and Wei \cite{AKW13} derived the exact joint density of singular values of $Y_M$ and established that these singular values form a determinantal point process. Subsequently, Kuijlaars and Zhang \cite{KZ14} obtained a double integral representation for the correlation kernel. Specifically, for the log-transformed matrices $\log(Y^*_MY_M)$, the correlation kernel is given by 
 \begin{equation}\label{logkn}
 	K_{M,N}(x,y) = \int_{\mathcal{C}}\frac{ds}{2\pi i}\oint_{\Sigma}\frac{dt}{2\pi i}\frac{e^{xt-ys}}{s-t}\frac{\Gamma(t)}{\Gamma(s)}\prod_{j=0}^M\frac{\Gamma(s+N+v_j)}{\Gamma(t+N+v_j)},
 \end{equation}
 where
 $\mathcal{C}$ is a positively oriented Hankel contour in the left half-plane, encircling $(-\infty,-1]$ while starting and ending at $-\infty$, and 
 $\Sigma$ is a closed contour enclosing $[0,N-1]$ chosen disjoint from $\mathcal{C}$.
 With $v_1, \dots, v_M$ being fixed, Liu, Wang and Zhang \cite{LWZ16} established the convergence to sine and Airy kernels for any fixed  $M$ and as $N \to \infty$. Conversely, for fixed $N$ and $M \to \infty$, Akemann, Burda, and Kieburg \cite{ABK14} proved that the $N$ finite-size Lyapunov exponents become asymptotically independent Gaussian random variables.
When both the depth  parameter 
$M$ and  the width parameter 
$N$ tend to infinity,
 Liu, Wang, and Wang \cite{LWW23} rigorously established the local
singular value statistics undergo a phase transition as the depth-to-width ratio (DWR) $M/N$ varies  from $0$ to $\infty$ for square complex  Ginibre matrices (each of size $N \times N$). This  crossover phenomenon was independently identified by Akemann, Burda, and Kieburg \cite{ABK19} in the physics literature.
Specifically,  as $M+N\to \infty$  there exists a universal three-phase transition at the soft edge.
  \begin{itemize}
 \item  {\bf{High DWR (Deep \& Narrow: $M/N \to \infty$)}}:  Gaussian fluctuation appears;

\item {\bf{Moderate DWR (Balanced: $M/N  \to \gamma \in (0,\infty)$) }}:  Critical kernel  arises;

\item  {\bf{Low DWR (Shallow \& Wide: $M/N \to 0$)}}:  Airy kernel appears at the soft edge.
\end{itemize}
A related double-limit phenomenon has been widely studied in diverse random matrix product ensembles \cite{AA22}\cite{AP23}\cite{BE25}\cite{GS22}\cite{GL25}
\cite{Jq17}\cite{LW24}, revealing universal transition patterns in mathematical perspective of spectral statistics. Among these results, the author and Liu established in \cite{GL25} a complete characterization of the phase transition for    local statistics of products of truncated unitary matrices, governed by a modified DWR.

The main goal of this paper   is to study the edge statistics of singular values for products of rectangular complex Ginibre matrices and to   identify a phase transition governed by the  DWR \begin{equation}
	\Delta_{M,N} = \sum\limits_{j=0}^{M} \frac{1}{N + v_j},
\end{equation} introduced in \cite{LWW23}, varies from 0 to $\infty$. 
 In the moderate DWR regime where  $	\lim_{N \to \infty}\Delta_{M,N} = \gamma \in (0,\infty)$, the critical kernel at the soft edge was established in \cite{LWW23}. We will  show that 
$\Delta_{M,N}$
 constitutes a sharp threshold for this transition.

\subsection{Main results}
  For a determinantal point process with correlation kernel $K_{M,N}(x,y)$, recall that the $n$-point correlation functions are given by
 \begin{equation}
 	R_{M,N}^{(n)}(x_1,\dots,x_n) = \det\bigl[K_{M,N}(x_i,x_j)\bigr]_{i,j=1}^n,
 \end{equation} see e.g.\cite{AGZ10}.
 Throughout this work, $\psi(z) = \Gamma'(z)/\Gamma(z)$ denotes the digamma function, while $\Re z$ and $\Im z$ denote the real and imaginary parts of $z$, respectively. We now present our main results.
\begin{theorem} {\bf({Normality})}\label{supthm}
	Suppose that $\lim\limits_{M+N \to \infty} \Delta_{M,N}=\infty$. For any fixed $k \in \mathbb{N}$, let \begin{equation}\rho_{M,N}(k)=\Big(\sum_{j=0}^M\psi'(n+v_j+1-k)\Big)^{\frac{1}{2}},
	\end{equation} and
	\begin{equation}
		x_i=\rho_{M,N}(k)\xi_i+\sum_{j=0}^M\psi(N+v_j+1-k) ,\quad i=1,2,\dots,n.
	\end{equation}
Then  the following limits for correlation functions of eigenvalues of $\log(Y^*_MY_M)$
	\begin{equation}
		\lim_{M+N \to \infty}\big(\rho_{M,N}(k)\big)^n	R_{M,N}^{(n)}(x_1,\dots,x_n) =
		\begin{cases}
			\frac{1}{\sqrt{2\pi}}e^{-\frac{\xi_1^2}{2}} ,&n=1\\
			0,& n>1,
		\end{cases}
	\end{equation}
hold uniformly for $\xi_1,\dots,\xi_n$ in a compact subset of $\mathbb{R}$.
\end{theorem}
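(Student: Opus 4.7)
The plan is to perform a residue/saddle-point analysis on the double contour integral~\eqref{logkn}. The key observation is that, in the high-DWR limit, the dominant contribution to $K_{M,N}$ in the scaling window around the $k$-th eigenvalue comes from the residue of the $t$-integrand at the pole $t = 1-k$ of $\Gamma(t)$, and this residue already has rank-one form $F(x)G(y)$. The case $n=1$ then reduces to asymptotically evaluating $F(x)G(x)$, which produces the Gaussian density, while the $n>1$ case follows because determinants of rank-one matrices of size at least two vanish identically.

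Concretely, using $\mathrm{Res}_{t=1-k}\Gamma(t) = (-1)^{k-1}/(k-1)!$, the residue contribution takes the form
\[
K^{\mathrm{main}}_{M,N}(x,y) = \frac{(-1)^{k-1}}{(k-1)!}\,e^{x(1-k)}\int_{\mathcal{C}}\frac{ds}{2\pi i}\,\frac{e^{-ys}}{s-(1-k)}\,\frac{1}{\Gamma(s)}\prod_{j=0}^{M}\frac{\Gamma(s+N+v_j)}{\Gamma(N+v_j+1-k)}.
\]
After substituting $x=\rho_{M,N}(k)\xi+\mu_{M,N}(k)$ with $\mu_{M,N}(k):=\sum_{j=0}^{M}\psi(N+v_j+1-k)$ and rescaling $s=1-k+u/\rho_{M,N}(k)$, a Taylor expansion of $\sum_{j}\log\Gamma(s+N+v_j)$ about $s=1-k$ produces a linear term that exactly cancels the $\mu_{M,N}(k)$-piece of $-ys$ (by the definition of $\mu_{M,N}(k)$) and a quadratic term equal to $u^2/2$ (by the definition of $\rho_{M,N}(k)$). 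The singular factors $1/\Gamma(s)$ and $1/(s-(1-k))$ combine to cancel the residue prefactor, and a steepest descent along a contour through $u=\xi$ produces $(2\pi)^{-1/2}e^{-\xi^2/2}$, which is the claimed $n=1$ limit.

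For $n>1$ the rank-one structure of $K^{\mathrm{main}}_{M,N}$ gives $\det[K^{\mathrm{main}}_{M,N}(x_i,x_j)]_{i,j=1}^{n} \equiv 0$ identically, so it suffices to bound the correction $K_{M,N}-K^{\mathrm{main}}_{M,N}$. This correction collects the residues at $t=1-k'$ for $k'\neq k$ (plus any residual piece of the $t$-contour); each is a rank-one kernel whose profile in $(x,y)$ is centered at $\mu_{M,N}(k')$. Since $|\mu_{M,N}(k')-\mu_{M,N}(k)|$ is of order $\Delta_{M,N}$, far exceeding the window size $\rho_{M,N}(k)=O(\sqrt{\Delta_{M,N}})$, these contributions are super-polynomially small throughout the scaling window, so the scaled determinant vanishes in the limit.

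The main obstacle is obtaining the error estimates uniformly across the different ways the limit $\Delta_{M,N}\to\infty$ can be realized, e.g.\ $M\to\infty$ with $N$ bounded, $N\to\infty$ with $M/N\to\infty$, or mixed regimes. In each sub-regime the $s$-saddle geometry and the deformation of $\mathcal{C}$ must be chosen so the steepest descent estimate is uniform in $\xi$ over compact sets. A secondary technical point is to verify, via Stirling-type bounds on $\log\Gamma(s+N+v_j)$ valid across all regimes, that the cubic remainder in the Taylor expansion is negligible, i.e.\ that $\rho_{M,N}(k)^{-3}\sum_{j=0}^{M}|\psi''(N+v_j+1-k)| \to 0$; since $|\psi''(z)|\sim 1/z^2$ while $\psi'(z)\sim 1/z$, this reduces to an elementary comparison that holds in all sub-regimes under consideration.
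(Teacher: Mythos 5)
Your proposal is correct and follows essentially the same route as the paper: your main term $K^{\mathrm{main}}_{M,N}$ (the residue at $t=1-k$) is exactly the paper's integral $I_1$ over the small circle $\Sigma_0(1-k)$, your Gaussian saddle point for the $s$-integral on a vertical line through $1-k$ (with the linear term cancelled by the centering $\sum_j\psi(N+v_j+1-k)$ and the quadratic term normalized by $\rho_{M,N}(k)$) matches the paper's Step 2, and your bound on the remaining residues via the spacing $|\mu_{M,N}(k')-\mu_{M,N}(k)|\asymp\Delta_{M,N}\gg\rho_{M,N}(k)$ is the content of the paper's Lemma \ref{supergollemma} and the estimate of $I_2$. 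The rank-one observation for $n>1$ is likewise how the paper's limit kernel $\tfrac{1}{\sqrt{2\pi}}e^{-\eta^2/2}$ yields a vanishing determinant, so no essential idea is missing.
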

In order to state the results for the regime $\lim\limits_{M+N \to \infty}\ \Delta_{M,N}=0$, we need to introduce the scaling parameter 
\begin{equation}
\rho_{M,N}=2^{-\frac{1}{3}}\Big(\frac{1}{z_0^2}-\sum_{j=0}^M\frac{1}{(N+v_j+z_0)^2}\Big)^{-\frac{1}{3}},
\end{equation}
and a spectral parameter
\begin{equation}
	\lambda_M=\frac{1}{z_0}\prod_{j=0}^M(N+v_j+z_0),
\end{equation}
where $z_0$ is the unique positive solution of the rational equation
\begin{equation}\label{defz_0}
	\sum_{j=0}^M\frac{1}{N+v_j+z}- \frac{1}{z}=0.
\end{equation}
We also give a deﬁnition of the celebrated Airy kernel by
\begin{equation}
	\begin{split}
		K_{\mathtt{Ai}}(x,y)
		&=\frac{1}{(2 \pi i)^2}\int_{\gamma_R}du\int_{\gamma_L}d\lambda \frac{e^{u^3/3-xu}}{e^{{\lambda}^3/3-y\lambda}}\frac{1}{u-\lambda},
	\end{split}
\end{equation}
where $\gamma_R$ and $\gamma_L$ are symmetric with respect to the imaginary axis, and $\gamma_R$ is a contour in the right-half plane going from $e^{-\frac{\pi i}{3}}\cdot \infty$ to $e^{\frac{\pi i}{3}}\cdot \infty$, see e.g.\cite{AGZ10}.
\begin{theorem}	{\bf(GUE edge statistics)}\label{subthm}	Suppose that $\lim\limits_{M+N \to \infty} \Delta_{M,N}=0$.\\
Let 
	\begin{equation}\label{subcgd}
		x_i=\frac{\xi_i}{\rho_{M,N}}+\log \lambda_M,\quad i=1,2\dots,n.
	\end{equation}
Then  the following limits for correlation functions of eigenvalues of $\log(Y^*_MY_M)$
\begin{equation}
	\lim_{M+N \to \infty}
(\rho_{M,N})^{-n}	R^{(n)}_{M,N}(x_1,\dots,x_n)=\det\big[K_{\mathtt{Ai}}(\xi_i,\xi_j)\big]_{i,j=1}^n,
\end{equation}
hold uniformly for $\xi_1,\dots,\xi_n$ in a compact subset of $\mathbb{R}$.
\end{theorem}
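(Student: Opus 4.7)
The plan is to apply the classical saddle point method directly to the Kuijlaars--Zhang representation (1.2). Substituting $x_i=\log\lambda_M+\xi_i/\rho_{M,N}$ and writing the two kernel arguments as $x,y$ with spectral coordinates $\xi,\eta$, the kernel becomes
\[
K_{M,N}(x,y)=\int_{\mathcal{C}}\frac{ds}{2\pi i}\oint_{\Sigma}\frac{dt}{2\pi i}\,\frac{1}{s-t}\,\exp\!\Big(G(t)-G(s)+\frac{\xi t-\eta s}{\rho_{M,N}}\Big),
\]
with phase function
\[
G(z):=z\log\lambda_M+\log\Gamma(z)-\sum_{j=0}^{M}\log\Gamma(z+N+v_j).
\]
Because $1/z_0\leq\Delta_{M,N}\to 0$ by (1.11), the candidate saddle satisfies $z_0\to\infty$, which legitimizes Stirling's expansion $\psi(z)=\log z-\tfrac{1}{2z}-\tfrac{1}{12z^{2}}+O(z^{-4})$ uniformly at $z_0$ and at each $z_0+N+v_j$. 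Inserting this expansion into $G'(z_0)=\log\lambda_M+\psi(z_0)-\sum_j\psi(z_0+N+v_j)$, the logarithmic and $1/z$ contributions cancel precisely by (1.11), giving $G'(z_0)=O(A)$ where $A:=\tfrac{1}{z_0^{2}}-\sum_{j}\tfrac{1}{(z_0+N+v_j)^{2}}=\tfrac{1}{2}\rho_{M,N}^{-3}$. Differentiating once and twice more yields $G''(z_0)=\tfrac{A}{2}+O(z_0^{-3})$, $G'''(z_0)=-A+O(z_0^{-3})$, and $G^{(k)}(z_0)=O(z_0^{-(k-1)})$ for $k\geq 4$.

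I would then deform the contours so that both pass near $z_0$ along the steepest descent rays of the cubic phase, playing the roles of the Airy contours $\gamma_R$ (for $t$) and $\gamma_L$ (for $s$). The $t$-integrand is meromorphic with simple poles only at a subset of $\{0,-1,-2,\dots\}$, so $\Sigma$ can be reshaped into a contour through $z_0$ without crossing poles, while the $s$-integrand is analytic on $\{\Re s>-N\}$, so $\mathcal{C}$ can be pulled from the left half-plane to a nearby contour disjoint from the $t$-contour. After the local rescaling $t=z_0+\rho_{M,N}\tilde{t}$, $s=z_0+\rho_{M,N}\tilde{s}$, the Taylor expansion of $G$ gives
\[
G(z_0+\rho_{M,N}\tilde{t})-G(z_0)=G'(z_0)\rho_{M,N}\tilde{t}+\tfrac{1}{2}G''(z_0)\rho_{M,N}^{2}\tilde{t}^{2}+\tfrac{1}{6}G'''(z_0)\rho_{M,N}^{3}\tilde{t}^{3}+\cdots,
\]
in which the first two contributions are $O(A^{2/3})$ and $O(A^{1/3})$ and vanish, the cubic term is asymptotically a fixed multiple of $\tilde{t}^{3}$ calibrated by the $2^{-1/3}$ factor in (1.9) to match the Airy cubic phase, and higher-order terms $G^{(k)}(z_0)\rho_{M,N}^{k}=O(z_0^{1-k/3})$ disappear for $k\geq 4$. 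Combined with the Jacobian relation $ds\,dt/(s-t)=\rho_{M,N}\,d\tilde{s}\,d\tilde{t}/(\tilde{s}-\tilde{t})$, this yields $\rho_{M,N}K_{M,N}(x,y)\to K_{\mathtt{Ai}}(\xi,\eta)$ at the level of integrands, and hence $\rho_{M,N}^{-n}\det[K_{M,N}(x_i,x_j)]\to\det[K_{\mathtt{Ai}}(\xi_i,\xi_j)]$.

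The principal obstacle, in my view, is the uniform tail control required to turn the formal cubic reduction above into a rigorous convergence. One needs (i) uniform Stirling estimates valid simultaneously at $z_0$ and at all $z_0+N+v_j$ (both of which are large, but with $v_j$ and $M$ themselves allowed to vary), (ii) quantitative lower bounds on $\Re[G(z_0)-G(z)]$ along the deformed contours away from $z_0$ forcing exponential decay in $|\tilde{t}|,|\tilde{s}|$, and (iii) bookkeeping of the factor $1/(s-t)$ where the two contours approach each other near $z_0$. All three estimates must hold uniformly over parameter sequences $(M,N,v_0,\dots,v_M)$ with $\Delta_{M,N}\to 0$. Once these uniform tail bounds are in place, dominated convergence on the local window together with exponential decay on the tails delivers the claimed uniform convergence on compact $\xi$-sets, from which the convergence of correlation functions is immediate.
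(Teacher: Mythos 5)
Your outline is the same steepest-descent strategy the paper uses: locate the degenerate saddle $z_0$ of \eqref{defz_0}, verify that the first and second derivatives of the phase are $O(A)$ with $A=z_0^{-2}-\sum_j(z_0+N+v_j)^{-2}$ so that the cubic term $G'''(z_0)=-A+\cdots$ dominates after rescaling by $\rho_{M,N}\asymp A^{-1/3}$, and then extract the Airy kernel from a local window. That local computation is correct and matches the paper's $f_{M,N}'(q_0)=f_{M,N}''(q_0)=0$. The gap is that everything you defer under ``the principal obstacle'' is the actual content of the proof. The paper's Step 1 separates the two contours by routing the local $t$-rays through $q_0-(\rho_{M,N}^{-3/2})^{1/3}$ and the local $s$-rays through $q_0+(\rho_{M,N}^{-3/2})^{1/3}$, which is exactly what controls $1/(s-t)$ (your item (iii)); it then builds $\Sigma_{\text{global}}$ from explicit vertical, oblique and horizontal segments on each of which $\Re f_{M,N}$ is shown to be monotone --- the oblique pieces via the polar identity \eqref{urr} forcing the sign of the radial derivative, the vertical pieces via Lemma \ref{globalref} --- and only then obtains the exponential bounds \eqref{subglobales}--\eqref{globalestimate} (your item (ii)). None of this is routine bookkeeping, and a proof that stops before constructing these contours and proving these monotonicity and decay estimates is incomplete.

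Two further points on the local part. First, your bound $G^{(k)}(z_0)\rho_{M,N}^{k}=O(z_0^{1-k/3})$ tacitly assumes $\rho_{M,N}\asymp z_0^{2/3}$, i.e.\ $A\asymp z_0^{-2}$; this can fail for strongly unbalanced $v_j$. For instance, $M=1$ and $v_1=N^3$ give $z_0\sim N^2$ but $A\sim N^{-5}$, so $\rho_{M,N}\sim z_0^{5/6}$ and the crude estimate $G^{(4)}(z_0)=O(z_0^{-3})$ yields $\rho_{M,N}^4G^{(4)}(z_0)\to\infty$. The correct route keeps the cancellation inside $G^{(k)}(z_0)$: with $a=1/z_0$ and $a_j=(N+v_j+z_0)^{-1}$ one has $0\le a^{k-1}-\sum_ja_j^{k-1}\le(k-2)a^{k-3}A$, whence $\rho_{M,N}^{k}G^{(k)}(z_0)=O\big((z_0^{-3}A^{-1})^{(k-3)/3}\big)\to0$ because $z_0^{3}A\ge Nz_0/(N+z_0)\to\infty$. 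Second, with $\rho_{M,N}^{3}=(2A)^{-1}$ the rescaled cubic term is $-\tilde t^{3}/12$ rather than $\mp\tilde t^{3}/3$; matching the Airy phase requires $\rho_{M,N}=2^{1/3}A^{-1/3}$, so the constant $2^{-1/3}$ you invoke (and, for that matter, the paper's claim $f_{M,N}'''(q_0)=2+o(1)$ against its own definition of $\rho_{M,N}$) should be rechecked for consistency.
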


These results extend random matrix universality to non-square multiplicative chains, with potential implications for stability analysis in deep neural networks and  communication systems. Adapting  a proof strategy analogous to \cite{LWW23}, our central contribution provides a complete characterization of edge statistics through the  parameter $\Delta_{M,N}$, which quantifies the collective rectangularity of the system. Furthermore, in the low-DWR regime, we establish edge limits for matrices with different size via a spectral edge parameterization.

\section{Proofs of Theorems    \ref{supthm} and  \ref{subthm}}\label{sect.2}
The asymptotic analysis of $n$-correlation functions for eigenvalues of $\log(Y_M^*Y_M)$ reduces to the correlation kernel \eqref{logkn}, so we focus here on the kernel. Using steepest descent methods analogous to \cite{LWW23}, we decompose contours and apply Taylor expansions to isolate dominant terms, with the core difficulty being the choice of integration contours and precise estimates of integrals.
 In the proofs, the digama function admits a series representation for $z\neq 0,-1,-2,\cdots$
\begin{equation}
	\psi(z)=-\gamma_0+\sum_{n=0}^{\infty}\big(\frac
	{1}{n+1}-\frac{1}{n+z}\big),\quad \psi'(z)=\sum_{n=0}^{\infty}\frac{1}{(n+z)^2},
\end{equation}
where $\gamma_0$ is the Euler constant; see\cite{OLBC10}.  By Stirling's formula, the following asymptotic expansions hold uniformly as $z \to \infty$ in the sector $|\arg(z)| \leq \pi - \epsilon$ for any $\epsilon > 0$
\begin{equation}\label{eslogg}
	\log\Gamma(z) = \left( z - \frac{1}{2} \right) \log(z) - z + \log\sqrt{2\pi} + \frac{1}{12z} + O\left( \frac{1}{z^2} \right),
\end{equation}
\begin{equation}
	\psi(z) = \log(z) - \frac{1}{2z} + O\left( \frac{1}{z^2} \right).
\end{equation}

\subsection{Proof of Theorem \ref{supthm}}
\begin{proof}[Proof of Theorem \ref{supthm}]
Recall $\rho_{M,N}(k)$ and denote $g(k;\xi)=\sum_{j=0}^M\psi(N+v_j+1-k)+\xi\rho_{M,N}(k)$ in the setting. Direct calculation yields
\begin{align}\label{supker}
	e^{(k-1)(\xi-\eta)\rho_{M,N}(k)}K_{M,N}(g(k;\xi),g(k;\eta))=\int_{\mathcal{C}}\frac{ds}{2 \pi i} \oint_{\Sigma}\frac{dt}{2\pi i} \frac{\Gamma(t)}{\Gamma(s)}\frac
	{1}{s-t}\frac{e^{F(t;k)}}{e^{F(s;k)}}\frac{e^{(t+k-1)\rho_{M,N}(k)\xi}}{e^{(s+k-1)\rho_{M,N}(k)\eta}},
\end{align}
where 
\begin{align}
	F(t;k)=&\Big( \sum_{j=0}^M\psi(N+v_j+1-k)\Big)t-\sum_{j=0}^M \log \frac{\Gamma(t+N+v_j)}{\Gamma(N+v_j)}.
\end{align}

\textbf{Step 1: Contour constructions  and integral decomposition.}

 For the $s$-variable, define a vertical contour $\mathcal{L}_{1-k}$ passing to the right of $1-k$ as
\begin{align}
	&	\mathcal{L}_{1-k}=\Big\{1-k+\frac{2}{\rho_{M,n}(k)}+iy\mid y\in \mathbb{R}\Big\},\\
	&\mathcal{L}_{1-k}^{\text{local}}=\Big\{z \in \mathcal{L}_{1-k} \mid |\Im z|\leq N^{1/4}\rho_{M,N}^{-3/4}(k)\Big\},\quad  \mathcal{L}_{1-k}^{\text{global}}=\mathcal{L}_{1-k} \setminus\mathcal{L}_{1-k}^{\text{local}}.
\end{align}
For the $t$-variable, define the positively oriented contours $\Sigma_{-}(a)$ for $a \in (-N+1, 1)$ and $\Sigma_{-}(b)$ for $b \in (-N+1, \frac{1}{2})$ as
\begin{equation} 
	\begin{split}
		\Sigma_{-}(a)=&\Big\{a-\frac{2-i}{4}t \mid t \in [0,1]\Big\} \cup
		\Big\{a-\frac{2+i}{4}+\frac{2+i}{4}t \mid t \in [0,1]\Big\}
		\cup \\
		& \Big\{-t+\frac{i}{4} \mid t \in [\frac{1}{2}-a,N-\frac{1}{2}]\Big\} \cup \Big\{t-\frac{i}{4} \mid t \in [-N+\frac{1}{2},a-\frac{1}{2}]\Big\}\cup\\ &  \Big\{-N+\frac{1}{2}-it \mid t\in [-\frac{1}{4},\frac{1}{4}]\Big\}.\\
\Sigma_{-}(b)=&\Big\{b+\frac{2-i}{4}t \mid t \in [0,1]\Big\} \cup
		\Big\{b+\frac{2+i}{4}-\frac{2+i}{4}t \mid t \in [0,1]\Big\}
		\cup \\
		& \Big\{t-\frac{i}{4} \mid t \in [b+\frac{1}{2},1]\Big\} \cup \Big\{-t+\frac{i}{4} \mid t \in [-1,-b-\frac{1}{2}]\Big\}\cup\\ &  \Big\{1+it \mid t\in [-\frac{1}{4},\frac{1}{4}]\Big\}.
	\end{split}	
\end{equation}
Let $\Sigma_0(1-k)$ denote the positively oriented circle centered at $1-k$ with radius $\rho_{M,N}^{-1}(k)$. For $k=1,2,3,\dots$, the $t$-contour $\Sigma$ is then defined as
\begin{equation}
	\Sigma = \Sigma_0(1-k) \cup \Sigma_{-}\left( \tfrac{1}{2}-k \right) \cup \Sigma_{+}\left( \tfrac{3}{2}-k \right),
\end{equation}
where $\Sigma_{+}\left( \tfrac{3}{2}-k \right)$ is empty when $k=1$.(see figure \ref{fig}.)
\begin{figure}
	\centering
	\includegraphics{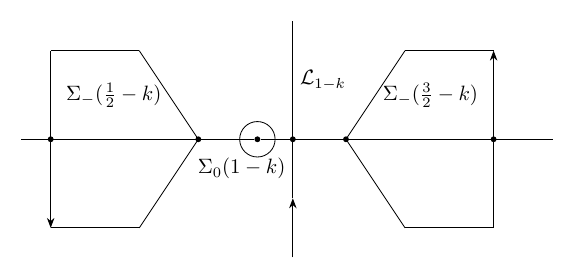}
	\caption{Schematic contours in proof of Theorem \ref{supthm}}
	\label{fig}
\end{figure}
\begin{equation}
	e^{(k-1)(\xi-\eta)\rho_{M,N}(k)}K_{M,N}(g(k;\xi),g(k;\eta))=I_1+I_2,
\end{equation}
where \begin{align}	&I_2=\int_{\mathcal{L}_{1-k}}\frac{ds}{2\pi i}\oint_{\Sigma_{-}(\frac{1}{2}-k) \cup \Sigma_{+}(\frac{3}{2}-k)}\frac{dt}{2\pi i}\frac{\Gamma(t)}{\Gamma(s)}\frac
	{1}{s-t}\frac{e^{F(t;k)}}{e^{F(s;k)}}\frac{e^{(t+k-1)\rho_{M,n}(k) \xi}}{e^{(s+k-1)\rho_{M,n}(k) \eta}},
\end{align}
and for $* = \text{local}$, $\text{global}$, or $\text{blank}$ (denoting the full contour), define \begin{align}
	&	I_1^{*}=\int_{\mathcal{L}^{*}_{1-k}}\frac{ds}{2\pi i}\oint_{\Sigma_0(1-k)}\frac{dt}{2\pi i}\frac{\Gamma(t)}{\Gamma(s)}\frac
	{1}{s-t}\frac{e^{F(t;k)}}{e^{F(s;k)}}\frac{e^{(t+k-1)\rho_{M,N}(k) \xi}}{e^{(s+k-1)\rho_{M,N}(k) \eta}}.\end{align}

\textbf{Step 2:  Asymptotics of $I_1^{\text{local}}$.}

Making the change of variables
\begin{equation}\label{cov}
	s = 1 - k + \frac{\sigma}{\rho_{M,N}(k)}, \quad
	t = 1 - k + \frac{\tau}{\rho_{M,N}(k)},
\end{equation}
we obtain the simplification
\begin{equation}
	\frac{e^{(t+k-1)\rho_{M,N}(k) \xi}}{e^{(s+k-1)\rho_{M,N}(k) \eta}} = e^{\xi \tau - \eta \sigma}.
\end{equation}
Furthermore, for $\sigma, \tau$ in any compact subset of $\mathbb{C} \setminus {0}$ and as $\Delta_{M,N} \to \infty$, we have
\begin{equation}
	\frac{\Gamma(t)}{\Gamma(s)} = \frac{\sigma}{\tau} \left( 1 + O \left( (|\tau| + |\sigma|) \rho_{M,N}^{-1}(k) \right) \right), \quad
	\frac{1}{s-t} = \frac{\rho_{M,N}(k)}{\sigma - \tau}.
\end{equation}
To estimate $I_1^{\text{local}}$, we use the following derivatives of $F(t;k)$.
\begin{align}
	F'(t;k) &= \sum_{j=0}^M \Big( \psi(N+v_j+1-k) - \psi(t + N + v_j) \Big), \\
	F''(t;k) &= -\sum_{j=0}^M \psi'(t + N + v_j),
\end{align}
which satisfy
\begin{equation}
	F'(1-k;k) = 0, \quad F''(1-k;k) = -\rho_{M,N}^2(k).
	\end{equation}
For $t \in \Sigma_0(1-k)$, applying Taylor expansion at $1-k$, we have
\begin{equation}\label{Ftestimate}
	\begin{split}
		F(t;k)=&F(1-k;k)+\frac{1}{2}F''(1-k;k)\big(\frac{\tau}{\rho_{M,N}(k)}\big)^2+O\Big(\frac{\tau^3}{N\rho_{M,N}(k)}
		\Big)\\
		&=F(1-k;k)-\frac{\tau^2}{2}+O\big(N^{-1/4}\rho_{M,N}^{-1/4}(k)\big),
	\end{split}
\end{equation} uniformly for $\tau = \tau' N^{1/4} \rho_{M,N}^{1/4}(k)$ with $\tau'$ in any compact subset of $\mathbb{C}$. Similarly, 
 this estimate \eqref{Ftestimate} also holds for $F(s;k)$ when $s \in \mathcal{L}^{\text{local}}_{1-k}$.
Combining \eqref{Ftestimate} for $F(t;k)$ and $F(s;k)$ with other elementary estimates,  we obtain
\begin{align}
	I_1^{\text{local}}=\frac{1}{\rho_{M,N}(k)}\int_{2-iN^{1/4}\rho_{M,N}^{1/4}(k)}^{2+iN^{1/4}\rho_{M,N}^{1/4}(k)}\frac{d\sigma}{2\pi i}\oint_{|\tau|=1}\frac{d\tau}{2\pi i} \frac{e^{-\frac{\tau^2}{2}+\xi\tau}}{e^{-\frac{\sigma^2}{2}+\eta\sigma}}\frac{\tau}{\sigma}\frac{1}{\sigma-\tau}\Big(1+O(N^{-1/4}\rho_{M,N}^{-1/4}(k))\Big).
\end{align}

\textbf{Step 3: Negligible Contributions.} 

We now establish the asymptotic negligibility of \(I_1^{\text{global}}\) and \(I_2\). For \(s \in \mathcal{L}_{1-k}^{\text{global}}\) parametrized as \(s = 1 - k + (2 + i y)\rho_{M,N}^{-1}(k)\) with \(|y| > N^{1/4} \rho_{M,N}^{1/4}(k)\), consider the derivative  
\begin{equation}
	\frac{d}{dy} \Re F(c + iy; k) = -\Im F'(s; k) \big|_{s=c+iy}
	= \sum_{j=0}^M \Im \psi(N + v_j + c + iy).
\end{equation} 
This derivative implies the existence of \(\epsilon > 0\) such that for all \(|y| \geq N^{1/4} \rho_{M,N}^{1/4}(k)\),
\begin{equation}\label{esref(s;k)}
	\Re F(s; k) \geq \Re F(s_{\pm}; k) + \epsilon N^{1/4} \rho_{M,N}^{1/4}(k) \left( |y| - N^{1/4} \rho_{M,N}^{1/4}(k) \right),
\end{equation}
where \(s_{\pm} = 1 - k + \left(2 \pm i N^{1/4} \rho_{M,N}^{1/4}(k)\right) \rho^{-1}_{M,N}(k)\).  
Combining \eqref{Ftestimate} with the values at \(s_{\pm}\), we obtain  
\begin{equation}\label{esf(s+-)}
	\Re \left( F(s_{\pm}; k) - F(1 - k; k) \right) = \frac{1}{2} \sqrt{N}  \rho_{M,N}^{1/2}(k) \big(1 + o(1)\big).
\end{equation}
Applying \eqref{esref(s;k)} and \eqref{esf(s+-)} yields the asymptotic bound  
\begin{equation}
	\begin{split}
		I_1^{\text{global}}=&\frac{1}{\rho_{M,N}(k)}e^{-\frac{1}{2}N^{1/4}\rho_{M,N}^{1/4}(k)\big(1+o(1)\big)}\int_{\Re \sigma=2,|\Im \sigma|>N^{1/4}\rho_{M,N}^{1/4}(k)}\frac{d\sigma}{2\pi i}\oint_{|\tau|=1} \frac{d\tau}{2\pi i}\frac{\Gamma(t)}{\Gamma(s)}\frac{e^{\xi\tau}}{e^{\eta\sigma}}\frac{1}{\tau-\sigma}\\
		&O\big(e^{-\epsilon n^{1/2}\rho_{M,N}^{1/2}(k)(|\Im \sigma|-N^{1/4}\rho_{M,N}^{1/4}(k))}\big).\end{split}
\end{equation}
To estimate $I_2$, we require the following lemma.
\begin{lemma}\label{supergollemma}
	There exists $\epsilon>0$, such that the inequality
	\begin{equation}
		\Re\big(F(t;k)-F(1-k;k)\big)\leq -\epsilon\rho_{M,N}^2(k)|t-1+k|
	\end{equation}
	holds for all $t$ on $\Sigma_{-}(\frac{1}{2}-k) \cup \Sigma_{+}(\frac{3}{2}-k)$.
\end{lemma}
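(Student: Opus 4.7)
The plan is to exploit the saddle structure of $F(\cdot;k)$ at $t=1-k$, where $F'(1-k;k)=0$ and $F''(1-k;k)=-\rho_{M,N}^2(k)<0$, so the real axis is the steepest-descent direction for $F$. Setting $w:=t-(1-k)$ and $a_j:=N+v_j+1-k$, I decompose
\[
F(t;k)-F(1-k;k) \;=\; \sum_{j=0}^{M} h(w;a_j), \qquad h(w;a):=\psi(a)\,w+\log\Gamma(a)-\log\Gamma(a+w),
\]
and reduce to a term-by-term estimate. The main analytic tool is the identity $h(w;a)=-\int_{0}^{w}(\psi(a+u)-\psi(a))\,du$ taken along a piecewise-linear path from $0$ to $w$. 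The contours $\Sigma_{-}(\tfrac{1}{2}-k)\cup\Sigma_{+}(\tfrac{3}{2}-k)$ are designed so that $\Re(a_j+w)\geq\tfrac{1}{2}$ throughout, keeping every $\psi$-term bounded away from its poles.

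First I would establish a real-axis descent estimate: for real $x$ with $a+x\geq\tfrac{1}{2}$, using $\psi(a+u)-\psi(a)=\int_{0}^{u}\psi'(a+r)\,dr$ together with the decrease of $\psi'$ on $(0,\infty)$ yields the quadratic bound
\[
h(x;a) \;\leq\; -\tfrac{x^{2}}{2}\,\psi'\bigl(a+\max(x,0)\bigr).
\]
Summing over $j$ gives $\sum_{j}h(x;a_j)\leq -c\,x^{2}\,\rho_{M,N}^2(k)$, once one checks that $\sum_{j}\psi'(a_j+\max(x,0))$ stays of the same order as $\rho_{M,N}^2(k)=\sum_{j}\psi'(a_j)$ on the relevant ranges ($x\in[-N+k-\tfrac{1}{2},\,-\tfrac{1}{2}]$ on $\Sigma_{-}$ and $x\in[\tfrac{1}{2},\,k]$ on $\Sigma_{+}$, with $k$ a fixed positive integer).

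Next, to promote the estimate from real $w$ to the horizontal edges $w=x\pm\tfrac{i}{4}$, I integrate along the short vertical segment from $a+x$ to $a+x\pm\tfrac{i}{4}$. Using $h(\bar w;a)=\overline{h(w;a)}$ to reduce to the $+i/4$ case,
\[
\Re\bigl(h(x+\tfrac{i}{4};a)-h(x;a)\bigr) \;=\; \int_{0}^{1/4}\Im\psi(a+x+is)\,ds,
\]
and the elementary identity $\Im\psi(y+is)=\sum_{n\geq 0}s/\bigl((n+y)^{2}+s^{2}\bigr)\leq s\,\psi'(y)$ bounds the upward correction by $\tfrac{1}{32}\psi'(a+x)$. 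This is dwarfed by the quadratic drop of Step~1 as soon as $|x|\geq\tfrac{1}{2}$, which holds everywhere on the horizontal edges. On the diagonal segments of length $\tfrac{\sqrt{5}}{4}$ near the inner endpoints $t=\tfrac{1}{2}-k,\,\tfrac{3}{2}-k$ and the vertical segment of length $\tfrac{1}{2}$ at the far-left of $\Sigma_{-}$, the parameter $|w|$ is bounded above and below by fixed constants; there a direct Taylor expansion of $h(w;a_j)$ at the nearest real point, exploiting $-h''(0;a_j)=\psi'(a_j)>0$, produces $\Re h(w;a_j)\leq -\epsilon\,\psi'(a_j)|w|$ with a uniform constant.

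The principal obstacle is the far-left stretch of the top and bottom edges of $\Sigma_{-}(\tfrac{1}{2}-k)$: there $|w|$ can reach order $N$ and $\Re(a_j+w)$ approaches $\tfrac{1}{2}$, so the Step-2 correction $\tfrac{1}{32}\sum_{j}\psi'(a_j+x)$ degenerates to order $\sum_{j}\psi'(\tfrac{1}{2}+v_j)$, which in the high-DWR regime is already comparable to $\rho_{M,N}^2(k)$ itself. What rescues the argument is the quadratic nature of the Step-1 drop, which in that regime is of order $N^{2}\rho_{M,N}^2(k)$ and so dominates the linear target $\epsilon N\rho_{M,N}^2(k)$ by a factor of $N$. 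Making this trade-off uniform in the fixed integer $k$ and in all admissible $v_j\geq 0$ is the main bookkeeping effort.
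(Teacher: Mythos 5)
Your argument is correct in substance but follows a genuinely different route from the paper's. The paper works with $\Re F$ directly along the contour: near the saddle it Taylor-expands, on the horizontal edges $x\pm\tfrac{i}{4}$ it observes that $\frac{d^2}{dx^2}\Re F(x\pm\tfrac{i}{4};k)=-\sum_{j}\Re\psi'(x+N+v_j\pm\tfrac{i}{4})<0$ (concavity in $x$) together with the positive sign of $\frac{d}{dx}\Re F$ at the inner endpoints, so that $\Re F$ decreases monotonically and at least linearly as $t$ moves outward, and it evaluates the far-left vertical segment directly. You instead decompose $F$ into per-factor terms $h(w;a_j)$, prove an explicit quadratic descent on the real axis via the integral representation and the monotonicity of $\psi'$, and then bound the correction coming from the imaginary offset $\pm\tfrac{i}{4}$ by $\tfrac{1}{32}\sum_j\psi'(a_j+x)$. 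Your route is more quantitative (it produces the rate $-\tfrac{x^2}{2}\rho_{M,N}^2(k)$ with explicit constants), at the price of an extra comparison between the vertical correction and the quadratic drop, which the paper's concavity argument absorbs automatically. That comparison does close: term by term, $x^2\psi'(a_j)\gtrsim x^2/a_j\gtrsim 1/(a_j+x)\gtrsim\psi'(a_j+x)$ whenever $a_j+x\ge\tfrac12$ and $\tfrac12\le|x|\le a_j-\tfrac12$, so the correction is $O(x^2\rho_{M,N}^2(k))$ uniformly. (Your phrase ``comparable to $\rho_{M,N}^2(k)$ itself'' understates the worst case: for $v_j\equiv 0$ the quantity $\sum_j\psi'(\tfrac12+v_j)$ is of order $N\rho_{M,N}^2(k)$; it is nonetheless dominated by the drop of order $N^2\rho_{M,N}^2(k)$, as you indicate.)

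One slip should be corrected: the far-left vertical segment $\{-N+\tfrac12+iy\mid |y|\le\tfrac14\}$ of $\Sigma_-(\tfrac12-k)$ is not a bounded-$|w|$ piece --- there $|w|\asymp N$ --- so the ``direct Taylor expansion at the nearest real point'' you reserve for the short segments does not apply to it. It is, however, covered by exactly the same Step-1/Step-2 estimate as the far-left ends of the horizontal edges, since there $\Re(a_j+w)=v_j+\tfrac12\ge\tfrac12$ and $|\Im w|\le\tfrac14$. With that segment reassigned to the Step-1/Step-2 regime, the proof is complete.
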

Combining Lemma \ref{supergollemma} with the estimates \eqref{Ftestimate} for $s\in \mathcal{L}^{\text{local}}_{1-k}$, and \eqref{esref(s;k)}, \eqref{esf(s+-)} for $s\in \mathcal{L}^{\text{global}}_{1-k}$, and noting that $|s-t|>1/4$ for sufficiently large $\Delta_{M,N}$, we obtain for some $\epsilon>0$,
\begin{equation}\label{esI2}
	\begin{split}
		I_2=	&\int_{\mathcal{L}_{1-k}}\frac{ds}{2\pi i}\oint_{\Sigma_{-}(\frac{1}{2}-k) \cup \Sigma_{+}(\frac{3}{2}-k)}\frac{dt}{2\pi i}\frac{\Gamma(t)}{\Gamma(s)}	\frac{e^{(t+k-1)\rho_{M,N}(k) \xi}}{e^{(s+k-1)\rho_{M,N}(k) \eta}}
		\\ 
		&\times \begin{cases}
			\big(1+O(N^{-1/4}\rho_{M,N}^{-1/4}(k))\big)e^{-\frac{F''(1-k;k)(s-1+k)^2}{2}},& |\Im s|\leq N^{1/4}\rho_{M,N}^{-3/4}(k),\\
			e^{-\frac{1}{2}\sqrt{N}\rho_{M,N}^{1/2}(k)-\epsilon N^{-1/4}\rho_{M,N}^{3/4}(k){\big(|\Im s|-N^{1/4}\rho_{M,N}^{-3/4}(k)\big)}},& \text{otherwise}.\\
		\end{cases}
	\end{split}
\end{equation}
Applying the steepest descent method to  $I_1^{\text{global}}$ and $I_2$, we have the asymptotic bounds
\begin{equation}\label{negterm}
	|I_1^{\text{global}}|=o\big(e^{-\frac{1}{4}\sqrt{N}\rho_{M,N}^{1/2}(k)}\big),\quad |I_2|=o\big(e^{-\frac{\epsilon}{4}\rho_{M,N}^2(k)}\big).\end{equation} 
As $N \to \infty$ and $ \Delta_{M,N} \to \infty$,  we have
\begin{equation}\label{esI1}
	I_1=\Big(1+O\big(\rho_{M,N}^{-1}(k)\big)\Big)\frac{1}{\rho_{M,N}(k)}\int_{2-i\infty}^{2+i\infty}\frac{d\sigma}{2\pi i}\oint_{|\tau|=1}\frac{d\tau}{2\pi i}\frac{e^{-\frac{\tau^2}{2}+\xi\tau}}{e^{-\frac{\sigma^2}{2}+\eta\sigma}}\frac{\tau}{\sigma}\frac{1}{\sigma-\tau}.
\end{equation}
From \eqref{esI2},\eqref{negterm} and \eqref{esI1}, we conclude that \(I_1\) is dominated by \(I_1^{\text{local}}\) while \(I_2\) is asymptotically negligible.

Combining with 
$\lim\limits_{M+ N\to \infty}\rho_{M,N}(k) I_1=\frac{1}{\sqrt{2\pi}}e^{-\eta^2/2}$, we thus complete the proof of  Theorem \ref{supthm}.
\end{proof}

\begin{proof}[Proof of Lemma \ref{supergollemma}]
 For $t \in B(1-k, 1+k)$, we have
\begin{equation}
	\rho_{M,N}^{-2}(k)\big(F(t;k) - F(1-k;k)\big) = -\frac{1}{2} \big(t - (1-k)^2\big) + O(N^{-1}),
\end{equation}
and $\frac{d}{dx} \Re F\big(-k \pm i/4;k\big) > 0$. Consequently, the function $\rho_{M,N}^{-2}(k) \Re \big(F(t;k) - F(1-k;k)\big)$ decreases as $t$ moves away from $1-k$ along the segment $B(1-k,1+k) \cap \left[ \Sigma_{-}\big(\frac{1}{2}-k\big) \cup \Sigma_{+}\big(\frac{3}{2}-k\big) \right]$. On the horizontal components of $\Sigma_{-}\big(\frac{1}{2}-k\big)$, 
\begin{align}
	\frac{d^2}{dx^2} \Re F\big(x \pm \tfrac{i}{4};k\big) 
	= \Re \left. \frac{d^2F(t;k)}{dt^2} \right|_{t=x\pm\frac{i}{4}} 
	= - \sum_{j=0}^M \Re \psi'\big(x+N+v_j \pm \tfrac{i}{4}\big)< 0.
\end{align}
it follows that $\Re \left[ \rho_{M,N}^{-2}(k) F(t;k) \right]$ decreases as $t$ moves away from $1-k$, since $\frac{d}{dx} \Re F\big(x \pm \frac{i}{4};k\big) > 0$ at the endpoints of these horizontal contours.
Finally, for $t = -N + \frac{1}{2} + iy$ with $y \in [-\frac{1}{4}, \frac{1}{4}]$, a direct calculation yields
\begin{equation}
	\Re \big(F(t;k) - F(1-k;k)\big) = -\rho_{M,N}^{2}(k) (N+1-k)^2 \big(1 + o(1)\big).
\end{equation}
Combining these estimates gives the desired result.
\end{proof}

\subsection{Proof  of Theorem \ref{subthm}}
\begin{proof}[Proof of Theorem \ref{subthm}]
After changing the variables $s,t \to \rho_{M,N}^{3/2}s,\rho_{M,N}^{3/2}t$, we obtain
\begin{equation}
K_{M,N}(x,y)=\rho_{M,N}^{3/2}\int_{\mathcal{C}}\frac{ds}{2 \pi i} \oint_{\Sigma}\frac{dt}{2\pi i}\frac{e^{\rho_{M,N}^{3/2}(xt- ys)}}{s-t}
	\prod_{j=0}^{M}\frac{\Gamma(\rho_{M,N}^{3/2}s+N+v_j)}{\Gamma(\rho_{M,n}^{3/2}t+N+v_j)}.
\end{equation}
Making the substitution $ g(\xi)=\xi\rho^{-1}_{M,N}+\log \lambda_M$, we find that
\begin{equation}
K_{M,N}(g(\xi),g(\eta))=\rho_{M,N}^{3/2}\int_{\mathcal{C}}\frac{ds}{2 \pi i} \oint_{\Sigma}\frac{dt}{2\pi i}\frac{e^{\rho_{M,N}^{1/2}(\xi t-\eta s)}}{s-t}e^{F_{M,N}(\rho_{M,N}^{3/2}s)-F_{M,N}(\rho_{M,N}^{3/2}t)},
\end{equation}
where 
\begin{equation}
	F_{M,N}(s)=\sum_{j=0}^M\log \frac{\Gamma(s+N+v_j)}{\Gamma(N+v_j)}-\log\Gamma(s)-s\log(\lambda_M).
\end{equation}
Applying Stirling's formula for $\log \Gamma(z)$ as given in \eqref{eslogg}, we obtain the asymptotic expansion
\begin{equation}
F_{M,N}(\rho_{M,N}^{3/2}s)=\rho_{M,N}^{3/2}f_{M,N}(s)+O(\Delta_{M,N}),
\end{equation}
where 
\begin{align}
	f_{M,N}(s)=\sum_{j=0}^M\frac{N+v_j}{\rho_{M,N}^{3/2}}\Big(1+\frac{\rho_{M,N}^{3/2}s}{N+v_j}\Big)\log \Big(1+\frac{\rho_{M,N}^{3/2}s}{N+v_j}\Big)-s(\log s
	-v_{M,N}),
\end{align}
with $v_{M,N}=\sum_{j=0}^M\log(N+v_j)-\log\rho_{M,N}^{3/2}-\log \lambda_M$. 

\textbf{Step 1: Contour constructions  and integral decomposition.}

We deform integration contours to isolate the critical point, noting $q_0 := z_0 \rho_{M,N}^{-3/2}$.
Deform the contour for $s$ as $\mathcal{C}=\mathcal{C}_{\text{local}} \cup \mathcal{C}_{\text{global}}$, where  $\mathcal{C}_{\text{local}}=\mathcal{C}_{\text{local},+} \cup \mathcal{C}_{\text{local},-}$,  with
\begin{align}
&\mathcal{C}_{\text{local},+}=\Big\{q_0+\big(1/\rho_{M,N}^{3/2}\big)^{1/3}+re^{\frac{4\pi i}{3}}\mid -\big(1/\rho_{M,N}^{3/2}\big)^{3/10}\leq r \leq 0\Big\},\\
&\mathcal{C}_{\text{local},-}=\Big\{q_0+\big(1/\rho_{M,N}^{3/2}\big)^{1/3}+re^{\frac{5\pi i}{3}}\mid 0\leq r \leq \big(1/\rho_{M,N}^{3/2}\big)^{3/10}\Big\},
\end{align}
and the global contour
\begin{align}
\mathcal{C}_{\text{global}}=&\Big\{q_0+\big(1/\rho_{M,N}^{3/2}\big)^{1/3}-\big(1/\rho_{M,N}^{3/2}\big)^{1/3}e^{\frac{4 \pi i}{3}}+iy \mid y\geq 0\Big\}\notag \\
&\cup \Big\{q_0+\big(1/\rho_{M,N}^{3/2}\big)^{1/3}+\big(1/\rho_{M,N}^{3/2}\big)^{1/3}e^{\frac{5 \pi i}{3}}+iy \mid y\leq 0\Big\}.
\end{align}
Deform the $t$-contour as $\Sigma = \Sigma_{\text{local}} \cup \Sigma_{\text{global}}$, where $\Sigma_{\text{local}} = \Sigma_{\text{local},+} \cup \Sigma_{\text{local},-}$ with
\begin{align}
	&\Sigma_{\text{local},+}=\Big\{q_0-\big(1/\rho_{M,N}^{3/2}\big)^{1/3}+re^{\frac{2\pi i}{3}}\mid 0\leq r \leq \big(1/\rho_{M,N}^{3/2}\big)^{3/10}\Big\},\\
	&\Sigma_{\text{local},-}=\Big\{q_0-\big(1/\rho_{M,N}^{3/2}\big)^{1/3}+re^{\frac{\pi i}{3}}\mid  -\big(1/\rho_{M,N}^{3/2}\big)^{3/10}\leq r \leq 0\Big\}.
\end{align}
The global contour $\Sigma_{\text{global}}$ is constructed to ensure $\Re f_{M,N}(t)$ attains its minimum at the left endpoints of $\Sigma_{\text{local},\pm}$. It is defined as
\begin{equation}
\Sigma_{\text{global}}:=\Sigma_{+}^1 \cup \Sigma_{+}^2 \cup \Sigma_{+}^3 \cup \Sigma^4 \cup  \Sigma_{-}^1 \cup \Sigma_{-}^2 \cup \Sigma_{-}^3,
\end{equation}
where 
\begin{align}
	&\Sigma_{\pm}^1= \Big\{\text{vertical line connecting the left end of $\Sigma_{\text{local},\pm}$ to $(x_1,\pm y_1)$}\Big\}, \notag\\
	&\Sigma_{\pm}^2=\Big\{t=x \pm iy \mid \frac{y-y_1}{y_2-y_1}=\frac{x-x_1}{x_2-x_1} \Big\},\notag\\
	&\Sigma_{\pm}^3=\Big\{t=x \pm iy_2 \mid x\in[-N/\rho_{M,N}^{3/2}+(2\rho_{M,N}^{3/2})^{-1},x_2]\Big\},\notag\\
	&\Sigma^4=\Big\{t=-N/\rho_{M,N}^{3/2}+(2\rho_{M,N}^{3/2})^{-1}+iy \mid y \in [-y_2,y_2]\Big\}.
\end{align}
The contours are oriented such that $\Sigma$ is positively oriented, while $\mathcal{C}$ is from $-i\infty$ to $i\infty$;
see figure \ref{subcontour}.
	\begin{figure}
	\centering
	\includegraphics{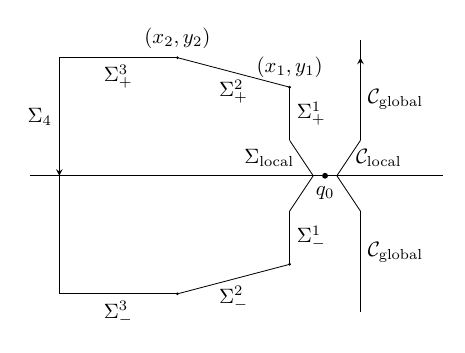}
	\caption{Schematic contours in proof of Theorem \ref{subthm}}
	\label{subcontour}
\end{figure}

In the following, we provide the selection strategy for the contour $\Sigma_{\text{global}}$ and show that $\Re f_{M,N}(t) $ attains its minimum at left end of $\Sigma_{\text{local},\pm}$ on $\Sigma_{\text{global}}$.
For $t \in \Sigma_{\pm}^1$, we set 
$x_1=q_0-\big(\rho_{M,N}^{-3/2}\big)^{1/3}-1/2\big(\rho_{M,N}^{-3/2}\big)^{3/10}$,
 and choose $y_1$ satisfies
$\frac{d\Re f_{M,N}(x_1+iy_1)}{dy}=0$.
Next, we choose  $C$ as a large enough positive constant, independent of $M,N$ and let $x_2=-C$, such that  $\frac{d\Re f_{M,N}(x_2+iy)}{dx}<0$ for $t\in \Sigma^3_{+}$ and choose $y_2$ such that $\frac{d \Re f_{M,N}(x_2+iy_2)}{dy}=0$.
At last, we illustrate that $\Re f_{M,N}(t)$ increases as $t$ along the $(x_1,y_1)\to (x_2,y_2)$.
Note that $z=re^{i\theta}$, $r>0$ and  the angle $\theta \in (\pi/2,\pi)$ is fixed.
By the first derivative 
\begin{equation}
	\begin{split}\frac{d\Re f_{M,N}(x_1+iy_1+z)}{dr}
		=\cos\theta \Re f'_{M,N}(x_1+iy_1+z)-\sin \theta \Im f'_{M,N}(x_1+iy_1+z),
	\end{split}
\end{equation}
and second derivative 
\begin{equation}\label{urr}
	\frac{d^2\Re f_{M,N}(x_1+iy_1+z)}{dr^2}=-\frac{1}{r}\frac{d\Re f_{M,N}(x_1+iy_1+z)}{dr}-\frac{1}{r^2}	\frac{d^2\Re f_{M,N}(x_1+iy_1+z)}{d\theta^2},
\end{equation}
we see that the signs of first-order and second-order derivatives are opposite. Combining \eqref{urr} and  $\frac{d\Re f_{M,N}(x_1+iy_1)}{dr}>0$ and $\frac{d\Re f_{M,N}(x_2+iy_2)}{dr}>0$, we can  conclude that $\frac{d\Re f_{M,N}(t)}{dr}>0$ for $t\in \Sigma^2_{+}$.
 Similarly, we can prove that 
$\Re f_{M,N}(t)$ increases as $t$ moves upward on $\Sigma^1_{+}$, or t moves downward on $\Sigma^1_{-}$ and  $\Re f_{M,N}$ increases as $t$ moves leftward, and attains its minimum at $(x_2,\pm y_2)$ for $t\in \Sigma^3_{\pm}$.  It is easy to see that $\Re f_{M,N}(t)$ attains its maximun at $t=-N/\rho_{M,N}^{3/2}+(2\rho_{M,N}^{3/2})^{-1}$ for $t \in \Sigma^4$.
Therefore, the integral decomposes into a local part  $\mathcal{C}_{\text{local}}\times \Sigma_{\text{local}} $  and a global part  $\mathcal{C}\times \Sigma \setminus (\mathcal{C}_{\text{local}}\times \Sigma_{\text{local}})$.

\textbf{Step 2: Asymptotics of the local part.}

We compute derivatives to analyze the local behavior,
\begin{align}
	&	f_{M,N}'(s)=\sum_{j=0}^M \log\Big(1+\frac{\rho_{M,N}^{3/2}s}{N+v_j}\Big)-\log s +v_{M,N},\\
	&f_{M,N}''(s)=\rho_{M,N}^{3/2}\sum_{j=0}^M\frac{1}{N+v_j+\rho_{M,N}^{3/2}s},\\
	&f_{M,N}'''(s)=-\rho_{M,N}^3\sum_{j=0}^M\frac{1}{(N+v_j+\rho_{M,N}^{3/2}s)^2}.
\end{align}
Recalling that $z_0$ defined in \eqref{defz_0} and  $q_0= z_0 \rho_{M,N}^{-3/2}$, we have
\begin{equation}
	f_{M,N}'(q_0)=0,\quad 	f_{M,N}''(q_0)=0,
	\quad	f_{M,N}'''(q_0)=2+o(1).
\end{equation}
After makig the change of variables \begin{equation}
t=q_0+\rho_{M,N}^{-1/2}\tau, \quad s=q_0+\rho_{M,N}^{-1/2}\sigma,
 \end{equation}
 and taking Taylor expansion at $q_0$
 in $B(q_0,\rho_{M,N}^{-\frac{3}{2}\times \frac{3}{10}})$, we obtain
\begin{equation}
	f_{M,N}(t)=f_{M,N}(q_0)+\frac{1}{6}f_{M,N}'''(q_0)(t-q_0)^3+O(\rho_{M,N}^{-\frac{3}{2}\times \frac{6}{5}}),
\end{equation}
The local part estimate is given by
\begin{align}\label{localestimate}
	&\big(\rho_{M,N}^{1/2}\big)e^{-\rho_{M,N}^{1/2}(\xi-\eta)q_0}\int_{\mathcal{C}_{\text{local}}}\frac{ds}{2\pi i}\int_{\Sigma_{\text{local}}}\frac{dt}{2 \pi i}e^{(\xi t -\eta s)\rho_{M,N}^{1/2}}\frac{e^{ \rho_{M,N}^{3/2}\frac{f_{M,N}'''(q)}{6}\big((s-q)^3-(t-q)^3\big)+O(\rho_{M,N}^{-\frac{3}{10}})}}{s-t}\notag \\
	&=\int_{\mathcal{C}^{\rho_{M,N}^{1/20}}_{<}}\frac{d\sigma}{2\pi i} \int_{\Sigma_{>}^{\rho_{M,N}^{1/20}}}\frac{d \tau}{2\pi i }\frac{1}{\sigma-\tau}\frac{e^{\frac{\sigma^3}{3}-\eta \sigma}}{e^{\frac{\tau^3}{3}-\xi \tau}}\big(1+O(\rho_{M,N}^{-\frac{3}{10}})\big) \\ \notag
	&=K_{\mathtt{Ai}}(\xi,\eta)+O(\rho_{M,N}^{-\frac{3}{10}}),
\end{align}
where the contours for real $R > 0$ are defined as upward oriented 
\begin{align}
	&\mathcal{C}_{<}^R=\{1+re^{\frac{4\pi i}{3}}\mid -R\leq r\leq 0\} \cup \{1+re^{\frac{5\pi i}{3}}\mid 0\leq r\leq R\},\\
	&\Sigma_{>}^R=\{-1+re^{\frac{2\pi i}{3}}\mid 0\leq r\leq R\} \cup \{-1+re^{\frac{\pi i}{3}}\mid -R\leq r\leq 0\}.
\end{align}

\textbf{Step 3:  Estimates of the global part.}

For $s$, we consider the infinite contour $\mathcal{C}_{\text{global}}$ split into two parts: $\mathcal{C}^1_{\text{global}} = \{ s \in \mathcal{C}{\text{global}} \mid |\Im s| \leq K \}$ and $\mathcal{C}_{\text{global}}^2 = \mathcal{C}_{\text{global}} \setminus \mathcal{C}_{\text{global}}^1$, where $K$ is a large positive constant.

\begin{lemma}\label{globalref}
	Let $x_0>q_0$, and  $C=\{x_0+iy\mid y\in \mathbb{R}\}\}$, then $\Re f_{M,N}(s)$ attains its global maximum at $x_0$ on $C$. Moreover, 
	\begin{equation}
		\frac{d \Re f_{M,N}(x_0+iy)}{dy}\begin{cases}
			<0, &y>0,\\
			>0,  & y<0.\\
		\end{cases}
	\end{equation}
\end{lemma}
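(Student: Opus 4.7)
The plan is to use the Cauchy--Riemann relation $\partial_y \Re f_{M,N}(x_0+iy) = -\Im f'_{M,N}(x_0+iy)$, which reduces both assertions of the lemma to proving $\Im f'_{M,N}(x_0+iy)>0$ for every $y>0$. The conjugate symmetry $\overline{f_{M,N}(s)}=f_{M,N}(\bar s)$ (valid since $f_{M,N}$ is real on $(0,\infty)$) then handles the $y<0$ case automatically, and strict monotonicity of $\Re f_{M,N}(x_0+i\,\cdot)$ in $|y|$ yields the global maximum claim at $y=0$.

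Using the principal branch of the logarithm in $f'_{M,N}$, I would first record the explicit identity
\begin{equation*}
\Im f'_{M,N}(x_0+iy)=\sum_{j=0}^M\arctan\!\left(\frac{\rho_{M,N}^{3/2}y}{N+v_j+\rho_{M,N}^{3/2}x_0}\right)-\arctan\!\left(\frac{y}{x_0}\right).
\end{equation*}
The next task is the scalar inequality $\sum_{j=0}^M(N+v_j+c)^{-1}>c^{-1}$ for $c:=\rho_{M,N}^{3/2}x_0>z_0$, which is simply the real-axis statement $f''_{M,N}(x_0)>0$. I would verify it by observing that the map $c\mapsto\sum_{j=0}^M c/(N+v_j+c)-1$ is a sum of strictly increasing functions of $c>0$ (each has derivative $(N+v_j)/(N+v_j+c)^2\ge 0$, with strict positivity whenever $N+v_j>0$) and vanishes at $c=z_0$ by \eqref{defz_0}; hence it is strictly positive for $c>z_0$, giving the inequality after dividing by $c$.

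The final and conceptually central step is to promote this real-axis inequality to the $y$-dependent statement. I plan to do so via the subadditivity of arctangent on positive arguments: multiplying the scalar inequality by $\rho_{M,N}^{3/2}y>0$ and setting $\alpha_j:=\rho_{M,N}^{3/2}y/(N+v_j+\rho_{M,N}^{3/2}x_0)>0$, we obtain $\sum_j\alpha_j>y/x_0$, and then
\begin{equation*}
\sum_{j=0}^M\arctan(\alpha_j)\;\geq\;\arctan\!\Big(\sum_{j=0}^M\alpha_j\Big)\;>\;\arctan(y/x_0),
\end{equation*}
where the first inequality is iterated subadditivity and the second is strict monotonicity of $\arctan$. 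This yields $\Im f'_{M,N}(x_0+iy)>0$ as needed.

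The main obstacle is precisely this last upgrade: the scalar inequality only controls $\Im f'_{M,N}$ infinitesimally at $y=0$ (through the value of $f''_{M,N}(x_0)$), whereas the lemma demands a uniform sign for all $y>0$. The subadditivity of $\arctan$ is the clean bridge between the two, and it crucially relies on having $M\ge 1$ so that more than one summand contributes to $\sum_j\arctan(\alpha_j)$; this is exactly the regime relevant to Theorem \ref{subthm}.
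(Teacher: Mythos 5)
Your proof is correct, and it starts from the same point as the paper's (computing $\tfrac{d}{dy}\Re f_{M,N}(x_0+iy)=-\Im f'_{M,N}(x_0+iy)$ and exploiting $x_0>q_0$ together with the defining equation \eqref{defz_0} for $z_0$), but it closes the argument differently and, in fact, more robustly. The paper linearizes the imaginary parts of the logarithms, writing the derivative as $-\rho_{M,N}^{3/2}y\sum_j(N+v_j+\rho_{M,N}^{3/2}x_0)^{-1}+O(\Delta_{M,N})$, an expansion that is only asymptotic and not uniform in $y$ (for large $|y|$ the $\arctan$ terms saturate rather than grow linearly), and the stated ``key observation'' there really amounts to the comparison $\sum_j(N+v_j+\rho_{M,N}^{3/2}x_0)^{-1}>(\rho_{M,N}^{3/2}x_0)^{-1}$ rather than mere positivity of the sum. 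Your route instead keeps the exact identity
\begin{equation*}
\Im f'_{M,N}(x_0+iy)=\sum_{j=0}^M\arctan\!\Big(\tfrac{\rho_{M,N}^{3/2}y}{N+v_j+\rho_{M,N}^{3/2}x_0}\Big)-\arctan(y/x_0),
\end{equation*}
derives the scalar inequality $\sum_j(N+v_j+c)^{-1}>c^{-1}$ for $c>z_0$ from monotonicity of $c\mapsto\sum_j c/(N+v_j+c)$, and upgrades it to all $y>0$ via subadditivity of $\arctan$ on $[0,\infty)$. This gives a non-asymptotic, uniform-in-$y$ sign statement with no error terms, which is a genuine improvement in rigor over the paper's sketch. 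One small correction: the subadditivity step does not ``crucially rely on $M\ge 1$'' --- for a single summand it degenerates to an equality and the strict inequality still comes from $\alpha_0>y/x_0$; what does require $M\ge 1$ is the existence of the positive root $z_0$ of \eqref{defz_0} in the first place.
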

By Lemma \ref{globalref}, $\Re f_{M,N}(s)$ decreases as $s$ moves upward on $\mathcal{C}_{\text{global}}^1$  $ \cap $    $\mathbb{C}_{+}$ or $s$ moves downward on $\mathcal{C}_{\text{global}}^1$ $\cap$ $\mathbb{C}_{-}$.
Direct computation shows that $\Im f'_{M,N}(t)>1$ if $s\in \mathcal{C}_{\text{global}}^2 \cap \mathbb{C}_{+}$ and $\Im f'_{M,N}(t)<-1$ if $s\in \mathcal{C}_{\text{global}}^2 \cap \mathbb{C}_{-}$. Hence $f_{M,N}(s)$ decreases at least linearly fast as $s$ moves to $\pm i \infty$ along $\mathcal{C}_{\text{global}}^2$.
For $t \in \Sigma_{\text{global}}$,  $\Re f_{M,N}(t;\lambda_M)$ attains a minimum at the leftend of $\Sigma_{\text{local},\pm}$.
Combining these global estimates for 
 $s\in \mathcal{C}_{\text{global}}$ and $t \in \Sigma_{\text{global}}$, there exists $\epsilon>0$ such that 
\begin{equation}\label{subglobales}
\Re \Big(f_{M,N}(s)-\frac{\xi(t-q_0)}{\rho_{M,N}}\Big)+\epsilon \rho_{M,N}^{-27/20}\\
	\leq \Re \Big(f_{M,N}(t)-\frac{\eta (t-q_0)}{\rho_{M,N}}\Big)-\epsilon \rho_{M,N}^{-27/20}.
\end{equation}
Thus, combining  \eqref{subglobales} with the estimates of 
$f_{M,N} (t; \lambda_M)$ on
$\Sigma_{\text{local}}$	 and $\mathcal{C}_{\text{local}}$
from \eqref{localestimate} yields
\begin{equation}\label{globalestimate}
	\begin{split}
	&	\rho_{M,N}^{-1}e^{-(\xi-\eta)\frac{z_0}{\rho_{M,N}}}\int\int_{\mathcal{C}\times\Sigma\setminus \mathcal{C}_{\text{local}}\times \Sigma_{\text{local}}}\frac{ds}{2\pi i}\frac{dt}{2\pi i}\frac{1}{s-t}e^{\rho_{M,N}^{1/2}(\xi t-\eta s)}e^{\rho_{M,N}^{3/2}(f_{M,N}(s)-f_{M,N}(t))}\\
	&=\rho_{M,N}^{-1}\int\int_{\mathcal{C}\times\Sigma\setminus \mathcal{C}_{\text{local}}\times \Sigma_{\text{local}}}\frac{ds}{2\pi i}\frac{dt}{2\pi i}\frac{1}{s-t}
	\frac{e^{\rho_{M,N}^{3/2}\Big(f_{M,N}(s)-\frac{\eta(s-q_0)}{\rho_{M,N}}\Big)}}{e^{\rho_{M,N}^{3/2}\Big(f_{M,N}(t)-\frac{\xi(t-q_0)}{\rho_{M,N}}\Big)}}
=O\big(e^{-\epsilon(\rho_{M,N}^{-3/20})}\big).
	\end{split}
\end{equation}

Combining the ``local" estimate \eqref{localestimate} and ``global" estimate \eqref{globalestimate}, we  complete the proof of  Theorem \ref{subthm}.
\end{proof}
	
	\begin{proof}[Proof of Lemma \ref{globalref}]
		We analyze the derivative of the real part of $f_{M,N}$ along the imaginary axis
		\begin{align}
			\frac{d}{dy}\Re f_{M,N}(x_0 + iy) 
			&= \Im \bigg( \log z 
			- \sum_{j=0}^M \log\Big(1+\frac{\rho_{M,N}^{3/2}z}{N+v_j}\Big) \bigg) \bigg|_{z=x_0+iy} \notag\\
			&= -\rho_{M,N}^{3/2}y \sum_{j=0}^M  \frac{1}{N+v_j + \rho_{M,N}^{3/2}x_0} 
	 + O(\Delta_{M,N}).
		\end{align}
		The key observation comes when $\rho_{M,N}^{3/2}x_0 > z_0$, where $z_0$ is the solution to equation \eqref{defz_0}, implying
$	\sum_{j=0}^M  (N+v_j + \rho_{M,N}^{3/2}x_0)^{-1}  > 0$. This inequality yields $\frac{d}{dy}\Re f_{M,N}(x_0+iy) < 0$ for $y > 0$
and $\frac{d}{dy}\Re f_{M,N}(x_0+iy) > 0$ for $y < 0$.
Consequently, $\Re f_{M,N}(s)$ attains its global maximum at $x_0$ on the contour $C$.
	\end{proof}

\begin{acknow}
 The author would like to express  his deep gratitude to Dang-Zheng Liu for his valuable advice. This work was supported by the National Natural Science Foundation of China \#12371157.
\end{acknow}

	\end{document}